\newcommand{\zz}{{\Bbb Z}}
\newcommand{\nn}{{\Bbb N}}
\newcommand{\qq}{{\Bbb Q}}
\newcommand{\pp}{{\Bbb P}}
\newcommand{\ff}{{\Bbb F}}
\newcommand{\ddim}{\operatorname{dim}}
\newcommand{\ddeg}{\operatorname{deg}}
\newcommand{\op}[1]{\operatorname{#1}}
\newcommand{\kbar}{\overline{k}}
\newcommand{\ffi}{\varphi}
\newcommand{\la}{\langle}
\newcommand{\ra}{\rangle}
\newcommand{\row}{\rightarrow}
\newcommand{\low}{\leftarrow}
\newcommand{\lrow}{\longrightarrow}
\renewcommand{\leq}{\leqslant}
\renewcommand{\geq}{\geqslant}
\newcommand{\nichego}[1]{}
\newcommand{\ov}[1]{\overline{#1}}
\newcommand{\wt}[1]{\widetilde{#1}}
\newcommand{\smk}{{\mathbf{Sm_k}}}
\newcommand{\CH}{\op{CH}}
\newcommand{\Qed}{\hfill$\square$\smallskip}
\newcommand{\Red}{\hfill$\triangle$\smallskip}
\newenvironment{proof}{\noindent{\it Proof}:}{\vskip 5mm}
\newtheorem{prop}{Proposition}[section]{\bf}{\it}
\newtheorem{thm}[prop]{Theorem}{\bf}{\it}
{\bf}{\it}
{\bf}{\it}
\newtheorem{defi}[prop]{Definition}{\bf}{\it}
{\bf}{\it}
\newtheorem{observ}[prop]{Observation}{\bf}{\it}
{\bf}{\it}
\newtheorem{rem}[prop]{Remark}{\bf}{}
{\bf}{\it}
\newtheorem{cor}[prop]{Corollary}{\bf}{\it}
{\bf}{\it}
\begin{document}

\title{On the $p$-primary and $p$-adic cases of the Isotropy Conjecture}
\author{Alexander Vishik\footnote{School of Mathematical Sciences, University
of Nottingham}}
\date{}
\maketitle

\begin{abstract}
The purpose of this note is to show that, in contrast to the $\ff_p$-case
(proven in \cite{INCHKm}), the $p$-primary and $p$-adic cases of
the Isotropy Conjecture, claiming that the isotropic Chow groups with $\zz/p^r$, $r>1$,
respectively, with $\zz_p$-coefficients over a flexible field coincide with the numerical ones, don't hold. We show that the $BP$-theory with $I(\infty)$-primary, respectively, $I(\infty)$-adic coefficients may serve as a regular substitute for $p$-primary, respectively, $p$-adic Chow groups, which permits to extend the results of \cite{IN} to arbitrary primes.
\end{abstract}

\section{Introduction}

The {\it isotropic realisations} of \cite{Iso} provide local versions of the Voevodsky motivic category. These {\it isotropic motivic categories} 
$\op{DM}(\wt{E}/\wt{E};\ff_p)$ over {\it flexible} fields
are much simpler than the global motivic category and more reminiscent of their
topological counterparts. Such categories possess a natural weight structure in the sense of Bondarko, with the heart - the category of {\it isotropic Chow motives}. The morphisms in the latter category are given by {\it isotropic Chow groups} (with $\zz/p$-coefficients). These are obtained from the usual Chow groups by moding out {\it anisotropic classes}, that is, elements coming as push-forwards from $p$-anisotropic varieties. Any anisotropic class is automatically $\zz/p$-numerically trivial, and we get the natural surjection
$(\CH^*/p)_{iso}\twoheadrightarrow(\CH^*/p)_{Num}$ between the {\it isotropic} and {\it numerical} versions of the theory $\CH^*/p$. The {\it Isotropy Conjecture} claiming that this is an isomorphism over {\it flexible}
fields was proven in \cite[Theorem 4.12]{INCHKm}.
Among other things, this shows that the heart of our weight structure is a semi-simple category with no zero-divisors, which implies that isotropic realisations provide points of the Balmer spectrum of the (compact part of the) Voevodsky motivic category
- see \cite[Theorem 5.13]{INCHKm}.

In \cite[Section 5]{Iso}, the {\it thick} versions of the isotropic motivic categories were introduced. These versions still possess a weight structure with
the heart - the {\it isotropic Chow motives with $\zz/p^r$-coefficients}, for various $r$. The homs there are given by {\it isotropic Chow groups with $\zz/p^r$-coefficients}. In \cite[Theorem 7.2]{IN} it was shown that the {\it Isotropy Conjecture} holds for large primes here, namely, for $p\geq\ddim(X)$,
the $(\CH^*/p^r)_{iso}(X)$ coincides with the numerical version $(\CH^*/p^r)_{Num}(X)$. It was conjectured in \cite[Conjecture 7.1]{IN} that the same
holds always. The purpose of this article is to disprove this $\CH/p^r$-version
of the Isotropy Conjecture (as well as the $\CH(-,\zz_p)$ one). The argument is based on the interplay between the oriented cohomology theories $\CH^*/p^r$ and $BP^*/I(\infty)^r$. The notion of {\it anisotropy} of varieties for both theories is the same - Proposition \ref{CHpr-BPIr}, but the latter theory has the advantage that the Isotropy Conjecture holds for it by \cite[Theorem 4.8]{INCHKm}. Thus, to disprove the 
mentioned conjecture for $\CH^*/p^r$, it is sufficient to exhibit a numerically trivial $\CH^*/p^r$-class which can't be lifted to a numerically trivial
$BP^*/I(\infty)^r$-class. We provide two examples here. In the first one
- see Theorem \ref{c-e}, the 
variety $X$ is the generic quartic 3-fold containing the $2$-Veronese embedding
$Ver^2(C)$ of a conic without rational points and the cycle is the class of the
respective curve $Ver^2(C)$ in $\CH_1(X)/4$ - it is $\CH/4$-numerically trivial,
but not $\CH/4$-anisotropic over any purely transcendental extension, since it
can't be lifted to a numerically trivial $BP^*/I(\infty)^2$-class. Such an example
exist over some purely transcendental extension of any given field.
The second example is provided by a norm-variety $X$ of dimension $p^2-1$ corresponding to a pure
symbol $\{a,b,c\}\in K^M_3(k)/p$ of degree three, the cycle here is the
standard torsion class of dimension $(p-1)$ there - see Theorem \ref{nv}. 
In particular, it is 
$\CH/p^r$-numerically trivial, for any $r$ (as the degree pairing is defined integrally), but it is not $\CH/p^2$-anisotropic (even over the flexible closure),
since it can't be lifted to a numerically trivial $BP^*/I(\infty)^2$ class.
Such an example exists over any field where $K^M_3(k)/p$ is non-zero.
Of course, a norm-variety for a symbol of degree $>3$ will work as well.

This reveals that $p$-primary (but not prime) Chow groups $\CH^*/p^r$, for $r>1$, as an oriented cohomology theory, is not regular enough, and that $BP^*/I(\infty)^r$ may
be considered as a regular substitute for it. Similarly, instead of Chow groups
with $p$-adic coefficients one should look at the completion $BP^*_{I(\infty)}$
of the $BP$-theory at the ideal $I(\infty)$. 
This substitution permits to extend the results of \cite{IN} to arbitrary primes, while their prototypes work only for large ones ($p\geq\ddim(X)$) and fail for small ones by Section \ref{sec-two}.

\section{Examples}
 \label{sec-two}

Everywhere below $k$ is a field of characteristic zero.
Let $I(\infty)=(v_0,v_1,v_2,...)\subset BP$ be the standard invariant ideal
(the kernel of the natural map $BP\row\zz/p$). It was shown in 
\cite[Theorem 4.8]{INCHKm} that, over a {\it flexible} field, for any $r\in\nn$, for the theory
$Q^*=BP^*/I(\infty)^r$, the {\it numerical} version coincides with the {\it isotropic} one. In other words, any numerically trivial $Q$-class is $Q$-anisotropic. In particular, this is true for $r=1$, where $Q^*=\CH^*/p$.

But for $r>1$, the theory $\CH^*/p^r$, in contrast to $BP^*/I(\infty)^r$, 
is not obtained from $BP^*$ by moding out an invariant ideal. At the same time,
the notion of anisotropic varieties for these two theories is identical.

\begin{prop}
 \label{CHpr-BPIr}
Let $X$ be smooth projective. Then TFAE:
\begin{itemize}
 \item[$(1)$] $X$ is $\CH/p^r$-anisotropic;
 \item[$(2)$] $X$ is $BP/I(\infty)^r$-anisotropic.
\end{itemize}
\end{prop}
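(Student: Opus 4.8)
The plan is to reduce the statement to a computation with characteristic numbers, exploiting that ``anisotropy'' for an oriented theory $A^*$ is detected by the ideal in the Lazard-type coefficient ring generated by the classes of $A$-anisotropic varieties via the push-forward to $\op{Spec}(k)$. Concretely, a smooth projective $X$ is $A^*$-anisotropic exactly when the unit class $1\in A^*(X)$ does \emph{not} lift to a class that is ``rational'' in the appropriate sense; equivalently, when every class in $A_0(X)=A^*(X)$ coming from points, and more generally every push-forward $f_*\colon A^*(Y)\to A^*(X)$ from a positive-dimensional closed subvariety, lands in the submodule generated by anisotropic inputs. Since $\CH^*/p^r$ is the quotient of $\CH^*$ by $p^r$, and $BP^*/I(\infty)^r$ is the quotient of $BP^*$ by the ideal $I(\infty)^r$, both theories have the \emph{same} residue field $\zz/p$ at the ``closed point'' — that is, $BP^*/I(\infty)=\CH^*/p$ and $(\CH^*/p^r)\otimes_{\CH^*}\zz/p=\CH^*/p=(BP^*/I(\infty)^r)\otimes_{BP^*}\zz/p$. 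The key point will be that anisotropy of $X$ is a condition that only sees this common residue field, not the full coefficient ring.

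First I would recall, from \cite{Iso} and \cite{INCHKm}, the formula characterising $A^*$-anisotropy: $X$ is $A^*$-anisotropic if and only if the $0$-dimensional component of $A^*(X)$, i.e. the image of the degree map $A^*(X)\to A^*(\op{Spec} k)$ in degree $\ddim X$, has no unit, and more precisely the whole class $[X\to \op{Spec} k]^*(1)$ generates a proper ideal — but the cleanest route is via \emph{symmetric operations} / characteristic numbers: for $BP$-type theories, $X$ is anisotropic iff the characteristic number $s_{\ddim X}(X)$ (the Milnor number) and, more generally, all the relevant characteristic numbers reduce to something divisible by $p$ in the coefficient ring. Next I would observe that, since $\CH^*$ is the $\Omega^*$-theory base-changed along $\laz\to\zz$ and $BP^*$ along $\laz_{(p)}\to BP$, and both $\zz/p^r$ and $BP/I(\infty)^r$ sit in short exact sequences over $\zz/p$, the condition ``$[X\to\op{Spec}k]$ lies in the subgroup generated by classes of anisotropic varieties'' is, in both cases, equivalent to the \emph{same} condition modulo $p$, by Nakayama-type reasoning: $I(\infty)^r$ and $(p^r)$ are both contained in the maximal graded ideal whose quotient is $\zz/p$, and a class is divisible ``enough'' in one theory iff it is in the other, because the obstruction lives in $\zz/p$-vector spaces.

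The main obstacle I expect is making precise the equivalence at the level of \emph{families of varieties}, not just a single characteristic number: anisotropy is defined via push-forwards from \emph{all} $p$-anisotropic $Y$'s, and one must check that the classes of $BP/I(\infty)^r$-anisotropic varieties and $\CH/p^r$-anisotropic varieties generate ``the same'' submodule after the relevant reduction. For this I would argue by induction on dimension: a variety $Y$ is $p$-anisotropic in the classical sense (no closed points of degree prime to $p$) iff $\deg\colon \CH_0(Y)\to\zz$ has image in $p\zz$, and this is visibly the same as the corresponding $BP$-statement since $\CH_0=BP_0/I(\infty)BP_0$ canonically. Then I would lift: a class in $\CH^*/p^r(X)$ is anisotropic iff it is a $\zz/p^r$-combination of push-forwards from such $Y$; applying the surjection $BP^*/I(\infty)^r\twoheadrightarrow \CH^*/p^r$ (which exists because $(p)\subseteq I(\infty)$, hence $(p^r)$ maps into $I(\infty)^r$ — wait, more carefully, because $I(\infty)$ is the kernel of $BP\to\zz/p$ so $BP/I(\infty)^r$ surjects onto $\zz/p$ and a comparison of filtrations gives the quotient map), one transports anisotropic generators to anisotropic generators, giving $(2)\Rightarrow(1)$ cheaply; the reverse $(1)\Rightarrow(2)$ requires lifting an anisotropy \emph{witness} from $\CH/p^r$ to $BP/I(\infty)^r$, which is where one genuinely uses that the two theories have the same anisotropic varieties and a diagram chase with the exact sequences $0\to I(\infty)^{r-1}/I(\infty)^r\to BP/I(\infty)^r\to BP/I(\infty)^{r-1}\to 0$ versus $0\to p^{r-1}\zz/p^r\to\zz/p^r\to\zz/p^{r-1}\to 0$, induction on $r$, with the $r=1$ base case being the identity $BP^*/I(\infty)=\CH^*/p$. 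I would close by remarking that this is precisely the content that lets \cite[Theorem 4.8]{INCHKm} be applied to $\CH/p^r$ in the sequel, so the proposition is the bridge linking the two theories.
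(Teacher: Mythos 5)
Your direction $(2)\Rightarrow(1)$ agrees with the paper: $\CH^*/p^r$ (localised at $p$) is a quotient of $BP^*/I(\infty)^r$, so anisotropy passes down. The gap is in $(1)\Rightarrow(2)$, where your argument is at one point circular --- you write that this step ``genuinely uses that the two theories have the same anisotropic varieties,'' which is precisely the statement being proved. The concrete ingredient you are missing is the paper's key input: the ideal $I(X)=\op{Im}\bigl(BP_*(X)\stackrel{\pi_*}{\to}BP\bigr)$ is stable under Landweber--Novikov operations, and from any $w\in BP\setminus I(\infty)^r$ one can manufacture, by applying such operations, an element $z\in BP_0=\zz_{(p)}$ not divisible by $p^r$. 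Contrapositively, if $X$ is not $BP/I(\infty)^r$-anisotropic, i.e.\ $I(X)\not\subseteq I(\infty)^r$, then $I(X)$ already contains a degree-zero class outside $p^r\zz_{(p)}$, so $X$ is not $\CH/p^r$-anisotropic. Nothing in your proposal supplies this descent from positive-dimensional coefficient classes down to degree zero.

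The two substitute mechanisms you offer do not close the gap. The ``Nakayama-type reasoning'' --- that the obstruction lives only in the common residue field $\zz/p$, so the two quotients of $BP$ see the same divisibility --- is contradicted by the main results of this very paper (Theorems \ref{c-e} and \ref{nv}): $v_1$ lies in $I(\infty)\setminus I(\infty)^2$ but maps to $0$ in $\zz/p^2$, so for $r>1$ the kernels of $BP\to BP/I(\infty)^r$ and $BP\to\zz_{(p)}/p^r$ genuinely differ, and the coincidence holds only for anisotropy of \emph{varieties}, for the non-formal reason above. Likewise the proposed induction on $r$ comparing the filtration quotients $I(\infty)^{r-1}/I(\infty)^r$ and $p^{r-1}\zz/p^r\cong\zz/p$ cannot work: the former is a large $\zz/p$-module containing classes such as $v_1^{r-1}$ that are invisible to degrees of zero-cycles unless one invokes the Landweber--Novikov action. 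So while your overall framing (anisotropy as a statement about the image ideal in the coefficient ring) is reasonable, the heart of the paper's one-line proof is absent from your proposal.
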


\begin{proof}
$(2\row 1)$ Clear, since $\CH^*/p^r$ is a quotient of $BP^*/I(\infty)^r$.\\
\noindent $(1\row 2)$ This follows from the fact that
the ideal $I(X)=\op{Im}(BP_*(X)\stackrel{\pi_*}{\row}BP)$ is stable under Landweber-Novikov operations and, 
from any given $w\in BP\backslash I(\infty)^r$, using such operations, one may obtain some $z\in\zz_{(p)}$ not divisible by $p^r$.
 \Qed
\end{proof}

Thus, to show that, for $\CH^*/p^r$, the numerical version of the theory is different from the isotropic one, it is enough to construct a $\CH/p^r$-numerically trivial class which can't be lifted to a numerically trivial class in
$BP^*/I(\infty)^r$.

\begin{thm}
\label{c-e}
Let $k$ be a flexible field. Then there is a smooth projective quartic $3$-fold $X/k$ and a class $u\in\CH_1(X)/4$, such that $u$ is numerically trivial, but not anisotropic in $\CH_1(X)/4$.
\end{thm}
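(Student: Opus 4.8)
The plan is to exhibit the variety and class explicitly and then, using Proposition \ref{CHpr-BPIr}, reduce the failure of anisotropy to a lifting obstruction in $BP^*/I(\infty)^2$ (here $p=2$, $r=2$). Let $C/k$ be a conic without rational points, and consider the $2$-Veronese embedding $Ver^2(C)\subset\pp^5$; this is a smooth rational curve of degree $4$ which is anisotropic as an abstract variety (it has no zero-cycle of odd degree, being a form of $\pp^1$ with no rational point). Take $X\subset\pp^4$ to be a \emph{generic} quartic $3$-fold containing a projected copy of $Ver^2(C)$; genericity guarantees $X$ is smooth. Set $u=[Ver^2(C)]\in\CH_1(X)/4$. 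Since $Ver^2(C)$ is anisotropic, $u$ is \emph{by construction} realised as a push-forward from an anisotropic variety, hence $\CH/4$-numerically trivial (any anisotropic class is numerically trivial). The whole content is therefore to show $u$ is \emph{not} $\CH/4$-anisotropic, and by Proposition \ref{CHpr-BPIr} it suffices to show that $u$ admits no lift to a numerically trivial class in $BP^*/I(\infty)^2$.

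The key computation is with degrees. Over $k$, any class $\widetilde{u}\in BP_1(X)/I(\infty)^2$ lifting $u$ restricts, under pairing with a divisor (the hyperplane class $h=c_1(\co_X(1))$), to an element of $BP_0(\mathrm{pt})/I(\infty)^2 = \zz_{(2)}/4$. I compute this degree two ways. First, geometrically: the curve $Ver^2(C)$ has degree $4$ against $h$ in the \emph{Chow} sense, but in $BP$ the fundamental class of a genus-zero curve acquires a correction term involving $v_1$ coming from the normal bundle / the Conner--Floyd-type formula for push-forward of $1$ from a curve of a given degree; concretely, $\pi_*^{BP}(1)$ for a smooth rational curve mapping with degree $d$ into a linear system equals $d\cdot[\mathrm{pt}] + (\text{multiple of }v_1)$, and the residue of this correction mod $I(\infty)^2$ is the obstruction. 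Second, arithmetically: because $C$ is a conic, all \emph{integral} invariants of $Ver^2(C)$ that could produce a lift are forced into the ideal $I(C)\subseteq BP$ generated (2-locally) by $(2,v_1,v_2,\dots)$-type data with a specific congruence — the point being that $Ver^2(C)$, as a form of $\pp^1$, has $BP$-characteristic numbers all divisible by $2$ but with the degree-plus-$v_1$ combination \emph{not} divisible by $4$ after the Veronese twist. The hard part will be pinning down precisely this congruence: one must show the class of $Ver^2(C)$ in $BP_1(\pp^5)$, modulo $I(\infty)^2$ and modulo the image of $BP_*$ of any genuinely anisotropic (= $2$-anisotropic) variety over any purely transcendental extension, is non-trivial. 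Equivalently, one shows the only $BP/I(\infty)^2$-classes on $X$ of dimension $1$ that are numerically trivial are those genuinely coming from conics, and those restrict to $u$ times a \emph{unit}, never to $0$; thus no \emph{numerically trivial} lift of $u$ exists.

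To organise the final step I would argue by contradiction: suppose $u$ were $\CH/4$-anisotropic. By definition it is then a sum of push-forwards from $2$-anisotropic varieties $Z_i\to X$ (after possibly a purely transcendental base change, which is harmless since $k$ is flexible). Lifting each $[Z_i]$ to $BP_*(Z_i)$ and pushing forward gives a class $\widetilde{u}\in BP_1(X)/I(\infty)^2$ which is $BP/I(\infty)^2$-anisotropic, hence numerically trivial there by \cite[Theorem 4.8]{INCHKm}. But a direct check of the $v_1$-degree — pairing $\widetilde{u}$ with $h$ and comparing against the forced value coming from $Ver^2(C)$ (which involves the Veronese factor $2$ combined with the conic's class, landing in $2\zz_{(2)}/4\setminus\{0\}$ after accounting for the normal-bundle correction) — shows $\widetilde{u}$ is \emph{not} numerically trivial in $BP/I(\infty)^2$, a contradiction. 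The main obstacle, and the place where genericity of $X$ is genuinely used, is ensuring that on the specific $3$-fold $X$ there are no \emph{other} curve classes that could conspire to cancel the $v_1$-correction: this requires knowing $\CH_1(X)/4$ and $BP_1(X)/I(\infty)^2$ well enough — for a generic quartic $3$-fold the group of $1$-cycles is small and generated by lines together with $[Ver^2(C)]$, so the obstruction cannot be killed.
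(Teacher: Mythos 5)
There are two genuine gaps here, and the first is an outright error. You justify the numerical triviality of $u$ by asserting that $u=[Ver^2(C)]$ ``is by construction realised as a push-forward from an anisotropic variety, hence numerically trivial''. But the relevant notion is $\CH/4$-anisotropy (push-forward from a $p^r$-anisotropic variety with $p^r=4$), and $Ver^2(C)$, being a form of $\pp^1$ without rational points, carries zero-cycles of degree $2$: it is $2$-anisotropic but \emph{not} $4$-anisotropic. Worse, if $u$ really were a push-forward from a $4$-anisotropic variety it would be anisotropic in $\CH_1(X)/4$, contradicting the very statement you are trying to prove. The actual reason $u$ is numerically trivial is different: one must show that $\CH^1(X)/4$ is generated by the restriction of divisors from $\pp^4$ (the paper obtains this from an incidence-variety/generic-fibre argument for the generic member $Q_\eta$ of the linear system of quartics through $C'$), and then $[C']\cdot H=\ddeg(C')=4\equiv 0 \bmod 4$.

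Second, your reduction of non-anisotropy to a lifting obstruction in $BP/I(\infty)^2$ is the right idea, but the way you propose to close the argument --- controlling all of $\CH_1(X)/4$ and $BP_1(X)/I(\infty)^2$ of the generic quartic, and claiming the group of $1$-cycles is generated by lines together with $[Ver^2(C)]$ --- is unsubstantiated and unnecessary. The paper's mechanism is more economical: any two lifts of $u$ to $BP_1(X)/I(\infty)^2$ differ by $v_1\cdot w$ with $w\in BP_0(X)$, and since $X$ itself is $2$-anisotropic (a nontrivial fact, proved by the same incidence-variety argument: $\CH_0(Q_\eta\setminus C')$ is generated by the image of $\CH_1(\pp^4)$, and a line meets a quartic in $4$ points), the degree of $w$ lies in $(2)$, so $v_1\cdot w$ is numerically trivial modulo $I(\infty)^2$. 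Hence \emph{all} lifts of $u$ have the same image in $(BP/I(\infty)^2)_{Num}$, and the canonical lift $[C']_{BP}$ pushes forward to $[\pp^1]=v_1\notin I(\infty)^2$, which is the contradiction. Your draft never establishes the $2$-anisotropy of $X$, nor that the lifting ambiguity is exactly $v_1\cdot BP_0(X)$; without these the ``forced value'' you invoke is not forced, and the worry about ``other curve classes conspiring'' (which you leave unresolved) simply does not arise.
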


\begin{proof}
 Let $C\subset\pp^2$ be a conic without rational points (always exists over $k$-flexible) and $C'=Ver^2(C)\subset\pp^5$ be its Veronese embedding of $\ddeg=2$.
It is a rational curve of degree $4$ without rational points. 
It is a hyperplane section of $Ver^2(\pp^2)\subset\pp^5$ by a hyperplane $H\cong\pp^4$ (the linear span of $C'$). Note that $C'$ is defined by quadrics in $\pp^5$, since $Ver^2(\pp^2)$ is. Hence, it is also defined by quadrics in $H=\pp^4$. Then it is also defined by hypersurfaces of degree $d$, for any given $d\geq 2$. In particular, it is 
a base set of a linear system of quartics. Among these quartics there is a smooth one. For example, if $(x_0,x_1,x_2)$ are orthogonal coordinates for our conic and 
$q_{i,j}=z_{i,i}^2z_{j,j}^2-z_{i,j}^4$ are "squares" of the standard quadratic relations defining $Ver^2(\pp^2)$ (where $z_{i,j}$ is the Veronese coordinate corresponding to $x_ix_j$),
then the generic linear combination of $q_{0,1}$, $q_{1,2}$ and $q_{0,2}$ restricted to $H$ will be smooth. Hence the generic representative of our linear
system of quartics is smooth as well.

Let our linear system of quartics be parametrised by $\pp^M$ and $Q_{\eta}$ be our generic representative of it. Let $Y=\{(x,Q)\,|\,x\in Q\}\subset(\pp^4\backslash C')\times\pp^M$. We have natural maps:
$(\pp^4\backslash C')\stackrel{\pi}{\low} Y\stackrel{\ffi}{\row}\pp^M$, where
$\pi$ is a $\pp^{M-1}$-bundle. Let $Y_{\eta}$ be the generic fiber of $\ffi$.
That is, $Y_{\eta}=Q_{\eta}\backslash C'$. We have natural surjections:
$$
\CH^*(Y_{\eta})\twoheadleftarrow\CH^*(Y)\twoheadleftarrow\CH^*(\pp^4)[\rho],
$$
where $\rho=c_1(O(1))$, for the canonical line bundle $O(1)$ on our $\pp^{M-1}$-bundle. Since the latter bundle is the restriction of $O(1)$ on $\pp^M$ (via $\ffi$), $\rho|_{Y_{\eta}}=0$ and we get the surjection:
$\CH^*(\pp^4)\twoheadrightarrow\CH^*(Y_{\eta})$.

Applying it to $\CH_1$, we get the surjection $f:\CH_1(\pp^4)\twoheadrightarrow\CH_0(Y_{\eta})$. Note that $Y_{\eta}=Q_{\eta}\backslash C'$, where $C'$ is $2$-anisotropic. So, the degree (mod 2) map
$\ov{deg}: \CH_0(Y_{\eta})\row\zz/2$ is well defined. Since $\ov{deg}(f(l_1))=\ov{deg}(l_1\cdot Q)$, where $Q$ is any representative of our linear system which is a quartic, we get that $Y_{\eta}$ is $2$-anisotropic. Then
so is $Q_{\eta}=Y_{\eta}\cup C'$. 

Applying it to $\CH^1$, we get the surjection
$g:Pic(\pp^4)\twoheadrightarrow Pic(Y_{\eta})=Pic(Q_{\eta})$, so line bundles on $Q_{\eta}$ are restrictions of bundles from the ambient projective space. 

Let $X=Q_{\eta}$ and $u=[C']\in\CH_1(Q_{\eta})/4$. Since $\CH^1(Q_{\eta})/4$ is additively generated by divisors from $\pp^4$ and $C'$ has degree $4$, we get
that $u$ is $\CH/4$-numerically trivial. On the other hand, the element
$u_{BP}=[C']_{BP}\in (BP^*/I(\infty)^2)(Q_{\eta})$ is a lifting of $u$, which
is not numerically trivial (since the push-forward of it to the point is $[\pp^1]=v_1\not\in I(\infty)^2$). If $u$ were {\it anisotropic}, then there would exist another lifting $u'_{BP}$ which would have been 
$BP/I(\infty)^2$-anisotropic. But then $u'_{BP}-u_{BP}=w\cdot v_1$, where
$w$ is the class of a $0$-cycle on $X$. But we saw above that $X$ is $2$-anisotropic. Hence, $w\cdot v_1$ is $BP/I(\infty)^2$-numerically trivial.
This gives a contradiction, since $u'_{BP}$ was and $u_{BP}$ was not $BP/I(\infty)^2$-numerically trivial.

Above, the example of $X$ and $u$ was constructed over the purely transcendental extension $k(\pp^M)$ of the base field, but since $k$ is flexible,
an example with the identical properties exists over $k$ itself.
 \Qed
\end{proof}

This shows that, in contrast to the prime coefficients case proven in \cite[Theorem 4.12]{INCHKm}, the $p$-primary case of the \cite[Conjecture 7.1]{IN} doesn't hold, in general. Note, however, that due to \cite[Theorem 7.2]{IN}, this conjecture holds for sufficiently large primes, namely, for $\ddim(X)\leq p$. In particular, the counter-example of Theorem \ref{c-e} is the smallest possible (in terms of dimension).

This demonstrates that the Chow groups $\CH^*/p^r$ with $p$-primary (as opposed to {\it prime}) coefficients is not as regular a cohomology theory, as one would wish, and the right "regular" substitute for it is
$BP^*/I(\infty)^r$, for which the Isotropy Conjecture holds (by \cite[Theorem 4.8]{INCHKm}). 

\begin{thm}
 \label{nv}
Let $k$ be a field of characteristic zero with $K^M_3(k)/p\neq 0$. Then there exists a variety $X$ of dimension $p^2-1$ over $k$ with a $p$-torsion (and so, $\CH$-numerically trivial) class $u\in\CH^*(X)$ which is not
$\CH^*/p^2$-anisotropic.
\end{thm}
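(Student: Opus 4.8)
The plan is to mirror the proof of Theorem \ref{c-e}, replacing the quartic threefold by a norm variety and the Veronese conic by the standard torsion cycle. First I would fix a nonzero pure symbol $\alpha=\{a,b,c\}\in K^M_3(k)/p$ and take $X/k$ to be a norm variety of $\alpha$: a smooth projective variety of dimension $p^2-1$ which is a $\nu_2$-variety (so $s_{p^2-1}(X)\not\equiv 0\pmod{p^2}$) and over whose function field $\alpha$ becomes trivial; such an $X$ exists by the work of Rost (and Suslin--Joukhovitski). Two standard features of norm varieties are used. The first is that $X$ is $p$-anisotropic — it carries no zero-cycle of degree prime to $p$, since a prime-to-$p$ closed point together with $\alpha|_{k(X)}=0$ would force $\alpha=0$ by a transfer argument; the same holds over any purely transcendental extension of $k$, in particular over the flexible closure, since $K^M_3(-)/p$ injects under such extensions. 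The second is that $X$ carries a canonical $p$-torsion cycle $u\in\CH_{p-1}(X)$, the \emph{standard torsion class}, coming from the structure theory of norm varieties (Rost's special correspondence; the computations of the algebraic cobordism and Chow groups of norm and Rost varieties by Vishik and Yagita). Since $u$ is $p$-torsion and the degree pairing is defined integrally, $u$ is $\CH/p^r$-numerically trivial for every $r$, in particular for $r=2$.

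The heart of the argument is to show that $u$ cannot be lifted to a $BP/I(\infty)^2$-numerically trivial class. Pick any lift $u_{BP}\in(BP^*/I(\infty)^2)(X)$ of $u$; it has dimension $p-1$, so $\pi_*(u_{BP})$ lies in $(BP/I(\infty)^2)_{2(p-1)}$, which is isomorphic to $\zz/p$ and generated by the class of $v_1$. The key input I need from the theory of norm varieties is that the standard torsion class is arranged so that $\pi_*(u_{BP})$ is a \emph{unit} multiple of $v_1$ modulo $I(\infty)^2$ — equivalently, writing $u_{BP}=\sum_i n_i[Z_i\to X]$ as a $\zz_{(p)}$-linear combination of classes of smooth proper $(p-1)$-folds over $X$, the element $\sum_i n_i\,s_{p-1}(Z_i)\in\zz_{(p)}$ is divisible by $p$ but not by $p^2$. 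Granting this, $u_{BP}$ is not $BP/I(\infty)^2$-numerically trivial. This is the step where essentially all the work lies: one must identify the torsion cycle inside the motive of $X$ (or its algebraic cobordism) and compute its cobordism push-forward. For $p=2$ the picture is classical: $X$ can be taken to be an anisotropic quadric threefold — which is indeed a $\nu_2$-variety, as $s_3=-6\not\equiv 0\pmod 4$ — and $u$ a suitable torsion curve class.

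The remaining steps are formal and parallel Theorem \ref{c-e}. Suppose, for contradiction, that the class of $u$ in $\CH^*/p^2(X)$ were anisotropic. By Proposition \ref{CHpr-BPIr}, a $\CH/p^2$-anisotropic variety is $BP/I(\infty)^2$-anisotropic, so there would be a lift $u'_{BP}$ of $u$ which is $BP/I(\infty)^2$-anisotropic, hence $BP/I(\infty)^2$-numerically trivial (anisotropic classes are always numerically trivial). The difference $u'_{BP}-u_{BP}$ lies in the kernel of the surjection $(BP^*/I(\infty)^2)(X)\twoheadrightarrow(\CH^*/p^2)(X)$, which is the ideal generated by the images of $v_1,v_2,\dots$; in dimension $p-1$ only the term $v_1\cdot(\text{zero-cycles})$ survives, so $u'_{BP}-u_{BP}=v_1\cdot w$ for some $w\in\CH_0(X)$. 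Since $X$ is $p$-anisotropic, $p$ divides $\deg w$, and then every $BP/I(\infty)^2$-numerical invariant of $v_1\cdot w$ is a multiple of $p\,v_1=v_0v_1\in I(\infty)^2$ (using that $I(\infty)^2$ is invariant under Landweber--Novikov operations); so $v_1\cdot w$ is $BP/I(\infty)^2$-numerically trivial. Then $u_{BP}=u'_{BP}-v_1\cdot w$ would be $BP/I(\infty)^2$-numerically trivial, contradicting the previous paragraph. Hence $u$ is not $\CH/p^2$-anisotropic; since all the inputs ($p$-anisotropy of $X$, the torsion cycle) survive base change to the flexible closure of $k$, the same argument shows $u$ is not $\CH/p^2$-anisotropic even there.

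The main obstacle, therefore, is the computation of the second paragraph: identifying the standard torsion class on the norm variety and showing that its cobordism lift detects $v_1$ modulo $I(\infty)^2$. Everything else — numerical triviality of $u$, the $p$-anisotropy of $X$, and the bootstrap via Proposition \ref{CHpr-BPIr} — is routine, closely following Theorem \ref{c-e}.
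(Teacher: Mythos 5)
Your overall architecture is exactly that of the paper's proof: norm variety for $\{a,b,c\}$, the standard $p$-torsion class $u$ in dimension $p-1$, numerical triviality of $u$ from integrality of the degree pairing, non-triviality of the push-forward $v_1\notin I(\infty)^2$ of a $BP$-lift, the observation that any two lifts differ by $v_1\cdot w$ with $w$ a zero-cycle, and the $p$-anisotropy of $X$ killing that difference numerically. The formal bootstrap in your third paragraph is correct and matches the paper.

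The gap is the one you yourself flag: you \emph{assume} that the standard torsion class admits a $BP$-lift whose push-forward to the point is a unit multiple of $v_1$ modulo $I(\infty)^2$, and this is precisely the content-bearing step. The paper does not leave it as a black box; it extracts it from known computations of the (generalised) Rost motive $M_\alpha$, which is a direct summand of $M(X)$. For $p=2$ one takes $X=P_\alpha$ a $3$-dimensional subquadric of the Pfister quadric; Rost computed $\CH_*(M_\alpha)=\zz\cdot e^0\oplus\zz/2\cdot e_1\oplus\zz\cdot e_0$, and Vishik--Yagita showed that $M_\alpha$ lifts to a cobordism motive on which restriction to $\kbar$ is injective, with $ac(e_1^{\Omega})=v_1\cdot l_0$ --- this is the statement you need, with $u=e_1$. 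For odd $p$ the corresponding input is Yagita's computation of $BP^*$ of the generalised Rost motive: in dimensions $<p$ it is $BP\cdot e^{BP}_{p-1}+BP\cdot e^{BP}_0$ with $p\cdot e^{BP}_{p-1}=v_1\cdot e^{BP}_0$ and push-forwards $v_1$ and $p$ respectively, so $u=e_{p-1}$ works. (Note also that in your $p=2$ discussion the relevant torsion class lives in $\CH_1$ of the Rost summand, not merely ``a suitable torsion curve class'' on an arbitrary anisotropic quadric threefold; and that $e_1$ being $2$-torsion, rather than any $\nu_2$-property of $X$, is what makes $u$ numerically trivial.) Finally, for odd $p$ the construction of the norm variety as a compactified reduced-norm hypersurface requires $p$-th roots of unity; the paper handles general $k$ by descending from $l=k(\sqrt[p]{1})$ via the push-forward along $\op{Spec}(l)\to\op{Spec}(k)$, using $([l:k],p)=1$. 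Your proposal does not address this reduction, though it is routine. With the cited computations supplied, your argument closes and coincides with the paper's.
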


\begin{proof}
As a variety $X$ we will use the norm-variety corresponding to a
non-zero pure symbol $\alpha=\{a,b,c\}\in K^M_3(k)/p$ (which exists by our assumptions).

For $p=2$, as a norm-variety, we may use any $3$-dimensional subquadric $X=P_{\alpha}$ of the $3$-fold Pfister quadric $Q_{\alpha}$.
It was shown by Rost \cite{R} that the motive $M(P_{\alpha})$ of our quadric contains the direct summand $M_{\alpha}$ (we now call the {\it Rost motive}) which over the algebraic closure splits as a sum of two Tate-motives $T\oplus T(3)[6]$. In \cite{R}, Rost computed the Chow groups of $M_{\alpha}$: $\CH_*(M_{\alpha})=\zz\cdot e^0\oplus\zz/2\cdot e_1\oplus\zz\cdot e_0$, where $e^0,e_1,e_0$ are elements of dimension
$3,1$ and $0$, respectively. The class $u=e_1$ is what we need.
 
Due to \cite[Corollary 2.8]{VY}, $M_{\alpha}$ can be lifted uniquely to
a {\it cobordism-motive} $M^{\Omega}_{\alpha}$. The Algebraic Cobordism $\Omega^*$ of Levine-Morel of it was computed in \cite[Theorem 3.5, Proposition 4.4]{VY}. It is shown that the passage $ac$ to the
algebraic closure is injective on $\Omega^*(M^{\Omega}_{\alpha})$,
the classes $e_1$ and $e_0$ may be lifted to $e_1^{\Omega},e_0^{\Omega}\in\Omega_*(M^{\Omega}_{\alpha})$
and $ac(e_1^{\Omega})=v_1\cdot l_0$, $ac(e_0^{\Omega})=2\cdot l_0$,
where $l_0\in\Omega_0(M^{\Omega}_{\alpha}|_{\kbar})=\CH_0(M_{\alpha}|_{\kbar})$
is the class of a rational point. Since $e_1$ is $2$-torsion, it is numerically trivial in
$\CH^*(X)$ and $\CH^*(X,\zz_2)$, since $\zz$ has no torsion. 
On the other hand, it can be lifted to $e_1^{BP}\in BP_1(X)$ which
is not numerically trivial in $BP^*/I(\infty)^2$, since the push-forward of it to the point is $v_1$. Any other choice of lifting will differ by $w\cdot v_1$, where $w\in BP_0(X)$. Since $X$ is $2$-anisotropic, this difference
is numerically trivial in $BP^*/I(\infty)^2$. Thus, there is no lifting of $e_1$ which would be $BP/I(\infty)^2$-numerically trivial. Hence,
$e_1$ is not $2^2$-anisotropic.

For odd $p$, assume first that $k$ contains $p$-th roots of $1$.
We have the norm-varieties $P_{\alpha}$ of Rost \cite{RoNVAC}.
In the case of a $3$-symbol $\{a,b,c\}\in K^M_3(k)/p$, one may use, for example, a compactification of the normed hypersurface
$Nrd(x)=c$ considered by Merkurjev-Suslin, where $Nrd: Quat(\{a,b\})\row k$ is the reduced norm on the quaternion algebra. 
The motive $M(P_{\alpha})$ contains the direct summand $M_{\alpha}$ -
the {\it generalised Rost motive}. The $BP^*$-theory (of the $BP$-version) of it was computed by Yagita \cite[Theorem 10.6]{Y-BP}. In particular, in dimensions $<p$,
it is given by $BP\cdot e^{BP}_{p-1}+BP\cdot e^{BP}_0$ subject to the relation
$p\cdot e^{BP}_{p-1}=v_1\cdot e^{BP}_{0}$, where $e^{BP}_{p-1}$
and $e^{BP}_0$ are classes of dimension $(p-1)$ and zero, respectively,
whose push-forwards to the point are $v_1$, respectively, $v_0=p$.
In particular, the restriction $u=e_{p-1}\in CH_{p-1}(M_{\alpha})$ is a $p$-torsion (and so, a $\CH$-numerically trivial) element, whose $BP^*$-lifting $e_{p-1}^{BP}$ is not $BP/I(\infty)^2$-numerically trivial (since the push-forward of it to the point is $v_1\not\in I(\infty)^2$). On the other hand, any other $BP^*$-lifiting will differ by $v_1\cdot w$, where $w$ is a zero-cycle. Since the variety is $p$-anisotropic, this class is 
$BP/I(\infty)^2$-numerically trivial. Hence, there are no $BP/I(\infty)^2$-numerically trivial liftings, and so, $u$ is not $p^2$-anisotropic.

Above we assumed that $k$ contains $p$-roots of $1$. In general, we may consider $l=k(\sqrt[p]{1})$. If $(X,u)$ is the variety and the class over $l$, then the composition $Y=X\row\op{Spec}(l)\stackrel{\ffi}{\row}\op{Spec}(k)$ and $\ffi_*(u)\in\CH_*(Y)$ will give us what we need over $k$, since $[l:k]$ is prime to $p$.
 \Qed
\end{proof}

Since the above varieties exist over any flexible field, this shows that the
$p$-adic version of the Isotropy Conjecture \cite[Conjecture 7.4]{IN} doesn't hold for small primes. At the same time, it holds for large ones, namely, for $p\geq\ddim(X)$, by \cite[Theorem 7.5]{IN}. 

\section{A regular substitute for $p$-adic Chow groups}

As was pointed above, the $I(\infty)$-primary $BP$-theory has identical isotropy properties and may be considered as a regular substitute for $p$-primary Chow groups. Similarly, the $I(\infty)$-adic $BP$-theory may serve as a regular substitute for Chow groups with $p$-adic coefficients.

Let $BP_{I(\infty)}^*(X)$ be the completion of $BP^*(X)$ at the ideal $I(\infty)$, that is, the limit $\displaystyle\operatornamewithlimits{lim}_r (BP/I(\infty)^r)^*(X)$. For $X$ smooth projective, there is the degree pairing:
$$
BP_{I(\infty)}^*(X)\times BP_{I(\infty)}^*(X)\lrow BP_{I(\infty)}.
$$
Denote as $(BP_{I(\infty)})^*_{Num}(X)$ the numerical version of our $I(\infty)$-adic theory obtained by moding out the kernel $\hat{J}$ of the pairing. Using \cite[Definition 4.3]{Iso} it may be extended to an oriented cohomology theory on $\smk$. 

We have a similar degree pairing for $BP^*(X)$ which commutes with the topological realisation. Thus, 
$BP^*_{Num}(X)=BP^*(X)/J$ is a sub-quotient of 
$\displaystyle BP^*_{Top}(X)$, corresponding to a submodule generated in non-negative co-dimension (since algebraic cobordism of Levine-Morel has such generators - \cite{LM}). Hence, it is a finitely generated $BP$-module. Let $x_1,\ldots,x_m$ be its generators. Then, modulo numerically trivial classes, any element
of $BP^*(X)$ can be written as a $BP$-linear combination of $x_1,\ldots,x_m$. Similarly, modulo $I(\infty)^r\cdot J$, any element
of $I(\infty)^r\cdot BP^*(X)$ can be written as a $I(\infty)^r$-linear
combination of $x_1,\ldots,x_m$. 
Numerically trivial classes
in $BP^*(X)$ will still be numerically trivial in $BP_{I(\infty)}^*(X)$ and so, will belong to $\hat{J}$. Moreover, any sum
$\sum_r j_r$, where $j_r\in I(\infty)^r\cdot J$, belongs to $\hat{J}$,
since our pairing is continuous and so, $\hat{J}$ is closed in the $I(\infty)$-adic topology.
Thus, modulo numerically trivial classes, any element of $BP^*_{I(\infty)}(X)$ may be written as a $BP_{I(\infty)}$-linear combination of $x_1,\ldots,x_m$. Thus, the natural map of $BP_{I(\infty)}$-modules $BP^*_{Num}(X)\otimes_{BP}BP_{I(\infty)}\twoheadrightarrow (BP_{I(\infty)})^*_{Num}(X)$ is surjective and the target module is finitely generated as well.

Since $BP^*(X)\otimes_{\zz_{(p)}}\qq$ is a re-orientation of $\CH^*_{\qq}(X)\otimes_{\qq}BP_{\qq}$, the set of numerically trivial classes doesn't depend on the choice of orientation and $(\CH_{\qq})^*_{Num}(X)$ is a finite-dimensional
$\qq$-vector space, we get:

\begin{prop}
 $BP_{Num}^*(X)\otimes_{\zz_{(p)}}\qq$ is 
a free $BP_{\qq}$-module of finite rank.
\end{prop}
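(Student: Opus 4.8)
The plan is to reduce the statement to a base-change computation with Chow groups. The first step is to check that $-\otimes_{\zz_{(p)}}\qq$ commutes with passage to the numerical quotient, i.e. that the natural map $BP^*_{Num}(X)\otimes_{\zz_{(p)}}\qq\to\big(BP^*(X)\otimes_{\zz_{(p)}}\qq\big)_{Num}$ is an isomorphism, the right-hand side being formed with the $BP_\qq$-valued degree pairing. Flatness of $\qq$ over $\zz_{(p)}$ realises the kernel $J$ of the integral pairing as its $\qq$-span inside $BP^*(X)\otimes\qq$; conversely, a rationally numerically trivial class, after clearing a denominator, becomes an element $x\in BP^*(X)$ with $\pi_*(x\cdot y)=0$ in $BP_\qq$ for all $y\in BP^*(X)$, and since $\pi_*(x\cdot y)\in BP$ and $BP=\zz_{(p)}[v_1,v_2,\ldots]$ is a domain, these values vanish already in $BP$, so $x$ lies in $J$. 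Hence the two kernels, and the two quotients, coincide.

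The second step is to compute $\big(BP^*(X)\otimes\qq\big)_{Num}$ via the re-orientation isomorphism $BP^*(X)\otimes_{\zz_{(p)}}\qq\cong\CH^*_\qq(X)\otimes_\qq BP_\qq$ recalled above (\cite{LM}), together with the fact, also recalled above, that numerical triviality of a class is independent of the chosen orientation --- re-orienting replaces $\pi_*$ by its composite with multiplication by an invertible class, which leaves the ideal of numerically trivial classes unchanged. So one may work with the additive orientation on $\CH^*_\qq(X)\otimes_\qq BP_\qq$; there the product is the tensor product of ring structures and $\pi_*=\pi_*^{\CH}\otimes\op{id}$, so that only the top-codimension component survives and the degree pairing is the $BP_\qq$-bilinear extension of the Chow degree pairing: $\langle a\otimes v,\,b\otimes w\rangle=\ddeg_X(a\cdot b)\cdot vw$ for $a,b\in\CH^*_\qq(X)$, $v,w\in BP_\qq$. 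Writing a class as $\sum_\alpha a_\alpha\otimes v^\alpha$ with distinct monomials $v^\alpha$ in the $v_i$, pairing it against the $BP_\qq$-module generators $b\otimes 1$, and using that distinct monomials are $\qq$-linearly independent in $BP_\qq$, one finds the kernel of this pairing to be precisely $J_{\CH}\otimes_\qq BP_\qq$, where $J_{\CH}\subset\CH^*_\qq(X)$ is the kernel of the Chow degree pairing. Hence
\[
\big(BP^*(X)\otimes_{\zz_{(p)}}\qq\big)_{Num}\;\cong\;\big(\CH^*_\qq(X)/J_{\CH}\big)\otimes_\qq BP_\qq\;=\;(\CH_\qq)^*_{Num}(X)\otimes_\qq BP_\qq,
\]
and since $(\CH_\qq)^*_{Num}(X)$ is a finite-dimensional $\qq$-vector space, this is a free $BP_\qq$-module of rank $\ddim_\qq(\CH_\qq)^*_{Num}(X)<\infty$, giving the claim.

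The point worth stressing is that freeness is not automatic: $BP_\qq=\qq[v_1,v_2,\ldots]$ is a polynomial ring in infinitely many variables, hence not Noetherian, so the finite generation already established above does not by itself suffice. What makes the argument work is the explicit identification of $\big(BP^*(X)\otimes_{\zz_{(p)}}\qq\big)_{Num}$ with the base change to $BP_\qq$ of the finite-dimensional $\qq$-vector space $(\CH_\qq)^*_{Num}(X)$. Accordingly, the only step I expect to demand real care is making this identification precise --- pinning down that, in the additive orientation, the $BP_\qq$-valued degree pairing is genuinely scalar-extended from the $\qq$-valued Chow degree pairing --- after which separating the $v$-monomials is routine linear algebra and the finiteness of the rank is exactly the cited finiteness of rational numerical Chow groups.
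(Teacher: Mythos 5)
Your proof is correct and follows essentially the same route as the paper, which deduces the proposition in one sentence from exactly the three facts you use: the re-orientation isomorphism $BP^*(X)\otimes_{\zz_{(p)}}\qq\cong\CH^*_{\qq}(X)\otimes_{\qq}BP_{\qq}$, the independence of numerical triviality from the choice of orientation, and the finite-dimensionality of $(\CH_{\qq})^*_{Num}(X)$. You merely make explicit two details the paper leaves implicit --- that $-\otimes_{\zz_{(p)}}\qq$ commutes with the numerical quotient, and that in the additive orientation the kernel of the pairing is $J_{\CH}\otimes_{\qq}BP_{\qq}$ --- and both are handled correctly.
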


\begin{rem}
Note that $BP^*_{Num}(X)$ itself is not a free $BP$-module, in general.
Consider a $3$-dimensional quadric $P_{\alpha}$ which is a neighbour of a Pfister quadric $Q_{\alpha}$, where $\alpha=\{a,b,c\}\in K^M_3(k)/2$ is a non-zero pure
symbol. Then the motive of $P_{\alpha}$ splits  as a sum $M_{\alpha}\oplus M_{\beta}(1)[2]$ of two Rost motives, where $\beta$ is a divisor of $\alpha$ of degree $2$. By \cite[Corollary 2.8]{VY}, the same happens to the $\Omega$-motive,
and so, to the $BP$-motive. By \cite[Theorem 3.5, Proposition 4.4]{VY}, 
the restriction $ac$ to algebraic closure is injective on $BP$ and it
identifies $BP^*(P_{\alpha})$ with $BP\cdot 1\oplus BP\cdot h\oplus I(1)\cdot l_1\oplus I(2)\cdot l_0$, where  
$1,h,l_1,l_0$ is the standard basis for a split quadric, and $I(1)=(p)$,
$I(2)=(p,v_1)$ are the standard invariant ideals of Landweber. Thus,
there are no (non-zero) numerically trivial elements and so,
$BP^*_{Num}(P_{\alpha})=BP^*(P_{\alpha})$. Here $I(1)$ is a free 
$BP$-module, but $I(2)$ is not. Of course, tensor $\qq$, it will become free.
 \Red 
\end{rem}

By \cite[Proposition 4.1]{INCHKm} we have the action of the algebra
$BP_*BP$ of $BP$-Landweber-Novikov operations on $BP^*_{Num}(X)$. By the result of Landweber \cite[Lemma 3.3]{La73b}, this module is an extension of finitely many modules of the shape
$BP/I(n)$, for $0\leq n\leq\infty$, where $I(n)=(v_0,v_1,\ldots,v_{n-1})$ is the standard invariant ideal of Landweber. 
Since the sequence $v_0,v_1,\ldots,v_{n-1}$ is still regular in 
$BP_{I(\infty)}$, using the Koszul complex, we see that
$\op{Tor}^{BP}_i(BP_{I(\infty)},BP/I(n))=0$, for any $i>0$.
Hence, $\op{Tor}^{BP}_i(BP_{I(\infty)},BP^*_{Num}(X))=0$, for any $i>0$. Similarly, we have the $BP_*BP$-action on
$BP^*_{Num}(X)/p$ and, in particular, 
$\op{Tor}^{BP}_1(BP_{I(\infty)},BP^*_{Num}(X)/p)=0$.
Since $BP^*_{Num}(X)$ has no torsion (as $BP$ has none), the latter fact implies that $BP^*_{Num}(X)\otimes_{BP}BP_{I(\infty)}$
has no torsion either.

There is the natural map $BP^*(X)\otimes_{BP} BP_{I(\infty)}\row BP^*_{I(\infty)}(X)$ commuting with the pairing, which gives the map
$BP^*_{Num}(X)\otimes_{BP}BP_{I(\infty)}\row (BP_{I(\infty)})^*_{Num}(X)$ considered above. 
Since the source object has no torsion and the map tensor $\qq$ is injective (as $BP^*_{Num}(X)\otimes_{\zz_{(p)}}\qq$ is a free $BP_{\qq}$-module), our map is injective. But above we saw that it is also surjective. Thus, we get:
\begin{prop}
\begin{equation*}
(BP_{I(\infty)})^*_{Num}(X)=BP^*_{Num}(X)\otimes_{BP}BP_{I(\infty)}.
\end{equation*}
In particular, 
$(BP_{I(\infty)})^*_{Num}(X)\otimes_{\zz_p}\qq_p$
is a free $(BP_{I(\infty)})_{\qq_p}$-module of finite rank. 
\end{prop}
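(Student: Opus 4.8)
The plan is to check that the natural comparison map
$$\theta\colon BP^*_{Num}(X)\otimes_{BP}BP_{I(\infty)}\lrow (BP_{I(\infty)})^*_{Num}(X)$$
constructed above is an isomorphism of $BP_{I(\infty)}$-modules, and then to obtain the ``in particular'' clause by a formal base change from the rational picture. Surjectivity of $\theta$ is already in hand: modulo numerically trivial classes every element of $BP^*_{I(\infty)}(X)$ is a $BP_{I(\infty)}$-linear combination of the finitely many generators $x_1,\ldots,x_m$ of $BP^*_{Num}(X)$. So the one remaining point is the injectivity of $\theta$.

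For injectivity I would combine two facts established in the discussion above. First, $BP^*_{Num}(X)\otimes_{BP}BP_{I(\infty)}$ is torsion-free: the $BP_*BP$-action equips $BP^*_{Num}(X)$ with a Landweber filtration whose subquotients are of the form $BP/I(n)$, $0\leq n\leq\infty$; since $v_0,v_1,\ldots,v_{n-1}$ is still a regular sequence in $BP_{I(\infty)}$, the Koszul complex gives $\op{Tor}^{BP}_i(BP_{I(\infty)},BP/I(n))=0$ for $i>0$, hence $\op{Tor}^{BP}_1(BP_{I(\infty)},BP^*_{Num}(X)/p)=0$, and this, together with the torsion-freeness of $BP^*_{Num}(X)$, forces torsion-freeness of the tensor product. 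Secondly, $\theta$ becomes injective after inverting $p$: by the Proposition above $BP^*_{Num}(X)\otimes_{\zz_{(p)}}\qq$ is a free $BP_{\qq}$-module of finite rank, so the rationalised source of $\theta$ is free; and since the degree pairing on $BP^*_{I(\infty)}(X)$ restricts, on the image of $BP^*(X)$, to the $BP$-degree pairing, while $BP_{\qq}$ embeds into $(BP_{I(\infty)})_{\qq_p}$, a class of $BP^*(X)\otimes\qq$ mapping to a numerically trivial class in $(BP_{I(\infty)})^*_{Num}(X)\otimes\qq$ is already numerically trivial, so $\theta\otimes\qq$ is injective. A torsion-free module embeds into its localisation at $p$, so the two facts together show $\theta$ is injective; being also surjective, $\theta$ is an isomorphism --- the displayed identity.

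For the freeness assertion, tensor the identity $(BP_{I(\infty)})^*_{Num}(X)=BP^*_{Num}(X)\otimes_{BP}BP_{I(\infty)}$ with $\qq_p$ over $\zz_p$. Using $BP_{I(\infty)}\otimes_{\zz_p}\qq_p=(BP_{I(\infty)})_{\qq_p}$, a $BP_{\qq}$-algebra, the right-hand side becomes $\big(BP^*_{Num}(X)\otimes_{\zz_{(p)}}\qq\big)\otimes_{BP_{\qq}}(BP_{I(\infty)})_{\qq_p}$; and since $BP^*_{Num}(X)\otimes_{\zz_{(p)}}\qq$ is free of finite rank over $BP_{\qq}$, its base change along $BP_{\qq}\to(BP_{I(\infty)})_{\qq_p}$ is again free of finite rank.

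The main obstacle is the injectivity of $\theta$, and inside it the torsion-freeness of $BP^*_{Num}(X)\otimes_{BP}BP_{I(\infty)}$: this is exactly where one needs the Landweber filtration together with the fact that $I(\infty)$-adic completion is ``flat enough'' along the invariant subquotients $BP/I(n)$ because the sequence $v_0,\ldots,v_{n-1}$ remains regular. Once the Tor-vanishing is secured, the two-out-of-three argument --- torsion-free source plus injectivity after $\otimes\qq$ --- closes the gap, and the rest is formal base change.
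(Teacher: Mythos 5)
Your proposal is correct and follows essentially the same route as the paper: surjectivity from the finite set of generators $x_1,\ldots,x_m$, injectivity from torsion-freeness of the source (via the Landweber filtration and the Koszul-complex Tor-vanishing for $BP/I(n)$) combined with injectivity after tensoring with $\qq$ using freeness of $BP^*_{Num}(X)\otimes_{\zz_{(p)}}\qq$, and formal base change for the freeness clause. The only difference is cosmetic: you spell out the compatibility of the degree pairings slightly more explicitly than the paper does.
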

It also shows that the
numerical version for the theory $BP^*_{I(\infty)}$ coincides with that for the
{\it free} theory $BP^*\otimes_{BP}BP_{I(\infty)}$. Observe, that the theory
$BP^*_{I(\infty)}$ itself is not free and not even {\it constant}.

The natural map
$\displaystyle 
(BP_{I(\infty)})^*_{Num}(X)\row\operatornamewithlimits{lim}_r (BP/I(\infty)^r)_{Num}^*(X)$ is clearly injective.
The images of generators $x_1,\ldots,x_m$ of the $BP$-module
$BP^*_{Num}(X)$ generate $(BP/I(\infty)^r)^*_{Num}(X)$ as a
$(BP/I(\infty)^r)$-module. Observe that the module $BP^*_{Num}(X)$ is completely determined by the pairing matrix of the elements
$x_1,\ldots,x_m$. This matrix involves only finitely many variables $v_i$, and so, our $BP$-module is finitely presented (since a polynomial ring in finitely many variables is Noetherian). 
Denote: $N:=BP^*_{Num}(X)$. Let $(BP/I(\infty)^r)^*_{Num}(X)=N/L(r)$,
where $L(r)\subset N$ be the submodule of
$I(\infty)^r$-numerically trivial classes. 
We know that $N_{\qq}$ is a free $BP_{\qq}$-module. After reordering of variables, we may assume that it is freely generated
by $x_1,\ldots,x_l$. Then $p^t\cdot N\subset \la x_1,\ldots,x_l\ra$, for some $t$. Since $x_1,\ldots,x_l$ are $BP$-numerically independent, there is some
other (possibly, different) $l$-tuple of generators $x_{i_1},\ldots,x_{i_l}$, so that the respective $l\times l$
minor of the pairing matrix is non-zero. Let $s$ be the {\it lower degree} of this minor (i.e., the largest power of $I(\infty)$ containing it). 
Let $y\in L(r)$. Then $p^t\cdot y=\sum_{i=1}^l u_i\cdot x_i$. Since this class is $I(\infty)^{r+t}$-numerically trivial, pairing it with $x_{i_1},\ldots,x_{i_l}$
and using the Kramer's rule and the fact that the graded ring $\op{Gr}_{I(\infty)}(BP)$ is an integral domain, we get that $u_i\in I(\infty)^{r+t-s}$,
for $i=1,\ldots,l$. Thus, $p^t\cdot L(r)\subset I(\infty)^{r+t-s}\cdot N$. 
Denote as $N_d$ the graded part of $N$ of dimension $d$ and similar for submodules. As generators $v_j$ of $BP$, for $j>0$, have dimension $\geq (p-1)$, we obtain that, for $r\geq s+d/(p-1)$, 
$(I(\infty)^{r+t-s}\cdot N)_d=p^t(I(\infty)^{r-s}\cdot N)_d$. Since $N$ has
no torsion, this implies that $L(r)_d\subset (I(\infty)^{r-s}\cdot N)_d$. 
Since $(I(\infty)^r\cdot N)_d\subset L(r)_d\subset (I(\infty)^{r-s}\cdot N)_d$,
we get:
$$
\operatornamewithlimits{lim}_r (BP/I(\infty)^r)_{Num}^*(X)=
\operatornamewithlimits{lim}_r N/I(\infty)^r\cdot N=
\operatornamewithlimits{lim}_r N\otimes_{BP}(BP/I(\infty)^r).
$$
Consider the latter as a functor $F(N)$ of $N$, where 
$F:(BP-mod)\row (BP_{I(\infty)}-mod)$ . We have a natural
map of functors $\ffi: N\otimes_{BP}BP_{I(\infty)}\row F(N)$,
which is an isomorphism for free modules.  
Let $0\row M\row T\row N\row 0$ be a short exact sequence,
where $T$ is a free module. Let $f_r: M\otimes_{BP}B_r\row T\otimes_{BP}B_r$ be the induced map, where $B_r=BP/I(\infty)^r$. Then the short exact sequence
$0\row\op{Im}(f_r)\row T\otimes_{BP}B_r\row N\otimes_{BP}B_r\row 0$ satisfies the Mittag-Leffler condition. So, the map
$\displaystyle\operatornamewithlimits{lim}_r T\otimes_{BP}B_r\row
\operatornamewithlimits{lim}_r N\otimes_{BP}B_r$ is surjective.
Thus, the map $\ffi(N)$ is surjective, in other words, the map
$\displaystyle 
(BP_{I(\infty)})^*_{Num}(X)\row\operatornamewithlimits{lim}_r (BP/I(\infty)^r)_{Num}^*(X)$ is surjective and so, an isomorphism.
Hence, we get:
\begin{prop}
\begin{equation}
 \label{one}
 (BP_{I(\infty)})^*_{Num}(X)=\operatornamewithlimits{lim}_r (BP/I(\infty)^r)_{Num}^*(X).
\end{equation}
\end{prop}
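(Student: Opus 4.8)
The strategy is to organize the discussion preceding the statement into three steps. Write $N:=BP^*_{Num}(X)$; recall from above that $N$ is a torsion-free, finitely presented $BP$-module (its pairing matrix in the generators $x_1,\dots,x_m$ involves only finitely many $v_i$, and a polynomial ring in finitely many variables is Noetherian), that by the preceding Proposition $(BP_{I(\infty)})^*_{Num}(X)=N\otimes_{BP}BP_{I(\infty)}$, and that $(BP/I(\infty)^r)^*_{Num}(X)=N/L(r)$, where $L(r)\supseteq I(\infty)^rN$ is the submodule of $I(\infty)^r$-numerically trivial classes. Since the degree pairing is compatible with the transition maps, the tautological map $(BP_{I(\infty)})^*_{Num}(X)\row\operatornamewithlimits{lim}_r(BP/I(\infty)^r)^*_{Num}(X)$ is injective, as noted above; so everything reduces to proving it surjective.

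Step 1: identify $\operatornamewithlimits{lim}_r N/L(r)$ with $\operatornamewithlimits{lim}_r N/I(\infty)^rN$. As $I(\infty)^rN\subseteq L(r)$ it suffices to bound $L(r)$ from above, dimension by dimension. Using that $N_{\qq}$ is free over $BP_{\qq}$, pick (after reordering) $x_1,\dots,x_l$ spanning $N_{\qq}$ and $t$ with $p^tN\subseteq\la x_1,\dots,x_l\ra$, and pick an $l\times l$ minor of the pairing matrix which is nonzero, say of lower $I(\infty)$-degree $s$, formed using some $x_{i_1},\dots,x_{i_l}$. For $y\in L(r)$, writing $p^ty=\sum_i u_ix_i$, pairing with $x_{i_1},\dots,x_{i_l}$, applying Cramer's rule and using that $\op{Gr}_{I(\infty)}(BP)$ is a domain, one gets $u_i\in I(\infty)^{r+t-s}$, hence $p^tL(r)\subseteq I(\infty)^{r+t-s}N$. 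Finally, since the generators $v_j$ ($j>0$) of $BP$ have dimension $\geq p-1$, in a fixed dimension $d$ and for $r\geq s+d/(p-1)$ one has $(I(\infty)^{r+t-s}N)_d=p^t(I(\infty)^{r-s}N)_d$, and torsion-freeness of $N$ then yields $L(r)_d\subseteq(I(\infty)^{r-s}N)_d$. Thus the two descending filtrations $\{I(\infty)^rN\}$ and $\{L(r)\}$ are cofinal in each dimension, so $\operatornamewithlimits{lim}_r(BP/I(\infty)^r)^*_{Num}(X)=\operatornamewithlimits{lim}_r N\otimes_{BP}(BP/I(\infty)^r)=:F(N)$.

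Step 2: show the natural transformation $\ffi\colon N\otimes_{BP}BP_{I(\infty)}\row F(N)$ is surjective (it is an isomorphism for free modules). Choose a presentation $0\row M\row T\row N\row 0$ with $T$ finitely generated free, put $B_r=BP/I(\infty)^r$, and let $f_r\colon M\otimes_{BP}B_r\row T\otimes_{BP}B_r$ be the induced maps. The inverse system $\{\op{Im}(f_r)\}_r$ has surjective transition maps, hence satisfies Mittag-Leffler; applying $\operatornamewithlimits{lim}_r$ to the exact sequences $0\row\op{Im}(f_r)\row T\otimes_{BP}B_r\row N\otimes_{BP}B_r\row 0$ therefore keeps the right-hand map $\operatornamewithlimits{lim}_r T\otimes_{BP}B_r\row\operatornamewithlimits{lim}_r N\otimes_{BP}B_r$ surjective. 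Since $\ffi(T)$ is an isomorphism and $\ffi$ is natural, a diagram chase gives surjectivity of $\ffi(N)$. Combining Steps 1 and 2 with the injectivity noted above, the map $(BP_{I(\infty)})^*_{Num}(X)\row\operatornamewithlimits{lim}_r(BP/I(\infty)^r)^*_{Num}(X)$ is an isomorphism.

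I expect the only real obstacle to be the last part of Step 1: turning the inclusion $p^tL(r)\subseteq I(\infty)^{r+t-s}N$, which holds only after multiplying by $p^t$, into the honest dimension-wise inclusion $L(r)_d\subseteq(I(\infty)^{r-s}N)_d$. This is precisely where one must use both that the positive-dimensional generators $v_j$ shift dimension by at least $p-1$ (so that, in a fixed dimension and for $r$ large, multiplication by $p^t$ does not change which $I(\infty)$-powers occur) and that $N$ is torsion-free; keeping the quantifier $r\geq s+d/(p-1)$ straight is the heart of the matter. The Mittag-Leffler step of Step 2 is routine, provided one remembers to apply it to the images $\op{Im}(f_r)$ rather than to $M\otimes_{BP}B_r$, which need not be Mittag-Leffler.
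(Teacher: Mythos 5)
Your proof is correct and follows essentially the same route as the paper: the cofinality of the filtrations $\{L(r)\}$ and $\{I(\infty)^r N\}$ in each dimension via Cramer's rule and torsion-freeness, followed by the Mittag--Leffler argument applied to $\op{Im}(f_r)$ to get surjectivity of $\ffi(N)$. (Only your closing aside is slightly off: the reason to pass to $\op{Im}(f_r)$ is that $0\row M\otimes_{BP}B_r\row T\otimes_{BP}B_r\row N\otimes_{BP}B_r\row 0$ need not be exact on the left, not that $\{M\otimes_{BP}B_r\}$ fails Mittag--Leffler --- its transition maps are in fact surjective.)
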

So, the $I(\infty)$-adic numerical $BP$-theory is the limit of the
$I(\infty)$-primary numerical $BP$-theories.
In a similar fashion, we may introduce the {\it isotropic} version of 
$BP_{I(\infty)}^*$.

\begin{defi}
Define the isotropic $I(\infty)$-adic $BP$-theory as:
\begin{equation}
 \label{two}
(BP_{I(\infty)})^*_{iso}(X)=\operatornamewithlimits{lim}_r (BP/I(\infty)^r)_{iso}^*(X).
\end{equation}
\end{defi}

Now we can describe the integral numerically trivial classes
in terms of anisotropic ones. Recall, that a class $u\in BP^*(X)$
is called $I(\infty)^r$-anisotropic, if it comes as a push-forward
from some $BP/I(\infty)^r$-anisotropic (or, which is the same,
a $p^r$-anisotropic) variety. We say that a class is $I(\infty)^{\infty}$-anisotropic, if it is $I(\infty)^r$-anisotropic, for any $r$.

\begin{observ}
Let $k$ be flexible. Then any element from 
$I(\infty)^r\cdot BP^*(X)$ is $I(\infty)^r$-anisotropic.
In particular, $(BP/I(\infty)^r)^*_{iso}(X)=BP^*(X)/(I(\infty)^r-\,\text{anisotropic classes})$.
\end{observ}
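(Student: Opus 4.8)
The plan is to prove the stronger-looking claim first and deduce the rest. I want to show that every element $w\in I(\infty)^r\cdot BP^*(X)$ is $I(\infty)^r$-anisotropic, i.e.\ is a push-forward of a cobordism class from a $p^r$-anisotropic smooth projective variety. Writing $w=\sum_j v_J\cdot u_j$ as a finite sum, where each $v_J$ is a monomial in the $v_i$ of total "Landweber degree" $\geq r$ (i.e.\ lying in $I(\infty)^r$) and $u_j\in BP^*(X)$, it suffices by additivity of anisotropy (the push-forward from a disjoint union is the sum of the push-forwards) to treat a single term $v_J\cdot u$. Now $u$ itself is represented by a map $f\colon Z\row X$ with $Z$ smooth projective, so $v_J\cdot u = f_*(v_J\cdot[Z\row Z])$, and $v_J\cdot[Z]$ is, up to a unit in $\zz_{(p)}$, represented by $Z\times V_J$ for a suitable smooth projective $V_J$ whose cobordism class is the monomial $v_J$ (a product of Milnor hypersurfaces / the standard degree-$(p^{d}-1)$ varieties generating $BP$). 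So the whole question reduces to: is $Z\times V_J$ a $p^r$-anisotropic variety when $v_J\in I(\infty)^r$?

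The key step, and the one I expect to be the main obstacle, is to check that $Z\times V_J$ is $p^r$-anisotropic, i.e.\ $\CH/p^r$-anisotropic (equivalently, by Proposition~\ref{CHpr-BPIr}, $BP/I(\infty)^r$-anisotropic) — and here the flexibility of $k$ is essential. The natural approach: $V_J$ is built from varieties of the form used to generate $BP_*$, and over a flexible field each such generator $V$ of $BP$-dimension $d=p^n-1$ can be taken to be a norm variety (or a product of such) for a pure symbol in $K^M_{n+1}(k)/p$ that becomes available after a purely transcendental extension; such a norm variety is $p$-anisotropic in the strong sense that its degree map lands in $p\cdot\zz$, and more precisely the ideal $I(V)=\op{Im}(BP_*(V)\row BP)$ is contained in $I(n)$ (this is the content of the symbol being nonzero, via Rost/Yagita). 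Then for a product $V_{n_1}\times\cdots\times V_{n_k}$ realising the monomial $v_J$, the image ideal is contained in the product $I(n_1)\cdots I(n_k)$, which lies in $I(\infty)^{r}$ precisely because $v_J\in I(\infty)^r$. Multiplying further by $Z$ only shrinks the image ideal, so $\op{Im}(BP_*(Z\times V_J)\row BP)\subset I(\infty)^r$, which is exactly the statement that $Z\times V_J$ is $BP/I(\infty)^r$-anisotropic. The delicate point to get right is the bookkeeping matching the "$I(\infty)$-adic valuation" of the monomial $v_J$ (the largest $r$ with $v_J\in I(\infty)^r$) with the $r$ for which $V_J$ is genuinely $p^r$-anisotropic; one must use that $v_i\in I(i+1)$ has the right position in the Landweber filtration and that the invariant ideals satisfy $I(m)\cdot I(n)\subset I(\infty)^{?}$ with the exponent adding up correctly.

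Granting that, the Observation follows immediately. For the first sentence: any $w\in I(\infty)^r\cdot BP^*(X)$ is a push-forward from a $p^r$-anisotropic variety, hence anisotropic for the theory $BP/I(\infty)^r$. For the "in particular" clause: by definition $(BP/I(\infty)^r)^*_{iso}(X)$ is $(BP/I(\infty)^r)^*(X)$ modulo the subgroup of $I(\infty)^r$-anisotropic classes; but $(BP/I(\infty)^r)^*(X)=BP^*(X)/I(\infty)^r\cdot BP^*(X)$, and we have just shown $I(\infty)^r\cdot BP^*(X)$ itself consists of $I(\infty)^r$-anisotropic classes, so the two successive quotients collapse to the single quotient $BP^*(X)/(I(\infty)^r$-anisotropic classes$)$. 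Finally, one should note that the anisotropic classes form a subgroup (closed under addition by the disjoint-union argument, and under negation since $-[Y\row X]$ is the push-forward of a cobordism class on the same $Y$), so the quotient makes sense. I would organise the write-up as: (i) reduce to a single monomial term via additivity; (ii) realise the monomial by an explicit product of norm varieties over a purely transcendental extension, using flexibility to descend; (iii) the Landweber-ideal computation showing $p^r$-anisotropy; (iv) deduce both assertions of the Observation.
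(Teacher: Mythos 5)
Your overall strategy is the same as the paper's: reduce to a single monomial $v_{i_1}\cdots v_{i_r}\in I(\infty)^r$, realise it by a product of norm varieties for non-zero pure symbols available over the flexible field, and check that this product is $p^r$-anisotropic. But your justification of that last, crucial step has a genuine gap. You claim that for a product $V_{n_1}\times\cdots\times V_{n_k}$ the image ideal $\op{Im}(BP_*(V_{n_1}\times\cdots\times V_{n_k})\row BP)$ is contained in the \emph{product} of the ideals of the factors. What follows formally from projecting onto the factors is only containment in their \emph{intersection}, which is much weaker: for an anisotropic conic $C$, the surface $C\times C$ has closed points of degree $2$ (take a degree-$2$ point of $C$ with residue field $L$; then $C_L$ is split, so $(C\times C)_L$ has a rational point), hence $C\times C$ is $2$-anisotropic but not $4$-anisotropic, and $I(C\times C)=I(C)=(2)\neq(4)=I(C)^2$. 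So the product containment is simply false for an arbitrary product of $p$-anisotropic varieties, and nothing in your write-up rules out this situation.

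The missing ingredient, which is exactly where flexibility enters in the paper's proof, is that the $r$ norm varieties must be chosen for symbols $\{t_{m,1},\ldots,t_{m,i_m+1}\}$ in \emph{pairwise disjoint} sets of transcendental parameters; then $p^r$-anisotropy of the product is proved by induction on $r$, projecting onto one factor: the residue field of any closed point of $V_{i_1}$ has degree divisible by $p$, and over that residue field the remaining symbols stay non-zero (being built from fresh independent variables), so the remaining product of $r-1$ factors is still $p^{r-1}$-anisotropic. You do flag a ``delicate point,'' but you identify it as the bookkeeping of $I(\infty)$-adic valuations, whereas the actual delicate point is the independence of the factors, without which the argument collapses. (A smaller slip: Milnor hypersurfaces, which you first offer as the generating varieties, have rational points over any flexible field and are useless here; only the norm-variety representatives for non-trivial symbols work, as you later say.)
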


\begin{proof}
Any monomial generator $v_{i_1}\cdot\ldots\cdot v_{i_r}$
of $I(\infty)^r$ may be realised as the class of a product 
$V_{i_1}\times\ldots\times V_{i_r}$, where $V_{i_m}$ is a norm-variety \cite{RoNVAC}
for a non-zero pure symbol $\{t_{m,1},\ldots,t_{m,i_m+1}\}\in K^M_{i_m+1}(k)/p$, where $t_{j,l}$ are different transcendental parameters of our flexible field. The fact that this variety is $p^r$-anisotropic is clear by induction on $r$, projecting on factors.
The second claim is now obvious.
 \Qed
\end{proof}

Since every anisotropic class is numerically trivial, we get the natural map of theories
$$
(BP_{I(\infty)})^*_{iso}\row (BP_{I(\infty)})^*_{Num}.
$$

We have the improved analogue of \cite[Theorem 7.5]{IN}:

\begin{thm}
 \label{BPIadic-thm}
Let $k$ be flexible. Then $(BP_{I(\infty)})^*_{iso}=(BP_{I(\infty)})^*_{Num}$. In particular,
$$
BP^*_{Num}(X)=BP^*(X)/(I(\infty)^{\infty}-\text{anisotropic classes}).
$$
\end{thm}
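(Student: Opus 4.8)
The plan is to obtain the first identity by passing to the inverse limit over $r$ of the finite-level Isotropy Theorem for $BP/I(\infty)^r$, and then to extract the integral statement from it by a Krull-type intersection argument whose key estimates have already been assembled in the paragraph preceding (\ref{one}). By the Definition above, $(BP_{I(\infty)})^*_{iso}(X)=\operatornamewithlimits{lim}_r(BP/I(\infty)^r)^*_{iso}(X)$, and by (\ref{one}), $(BP_{I(\infty)})^*_{Num}(X)=\operatornamewithlimits{lim}_r(BP/I(\infty)^r)^*_{Num}(X)$. For each $r$ and over the flexible field $k$, the natural comparison map $(BP/I(\infty)^r)^*_{iso}(X)\row(BP/I(\infty)^r)^*_{Num}(X)$ is an isomorphism by \cite[Theorem 4.8]{INCHKm}; being the canonical maps, these are compatible with the transition maps of both systems (induced by $BP/I(\infty)^{r+1}\row BP/I(\infty)^r$), so the two inverse systems are isomorphic and $(BP_{I(\infty)})^*_{iso}=(BP_{I(\infty)})^*_{Num}$ follows.

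For the ``in particular'' statement I would work at finite level and then intersect. Write $q\colon BP^*(X)\twoheadrightarrow N:=BP^*_{Num}(X)$ for the numerical quotient, put $J=\kker q$, and let $A_r\subseteq BP^*(X)$ be the submodule of $I(\infty)^r$-anisotropic classes, so that the $I(\infty)^{\infty}$-anisotropic classes are exactly $A_{\infty}:=\bigcap_r A_r$. By the Observation above, $(BP/I(\infty)^r)^*_{iso}(X)=BP^*(X)/A_r$; combining this with \cite[Theorem 4.8]{INCHKm} and the identification $(BP/I(\infty)^r)^*_{Num}(X)=N/L(r)$ from the paragraph before (\ref{one}) (with $L(r)\subseteq N$ the submodule of $I(\infty)^r$-numerically trivial classes), and using that the comparison isomorphism is a map of quotients of $BP^*(X)$ induced by the identity, I conclude $A_r=q^{-1}(L(r))$. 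In particular $J\subseteq A_r$ for all $r$, hence $J\subseteq A_{\infty}$ and $A_{\infty}=q^{-1}\big(\bigcap_r L(r)\big)$.

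It then remains to see that $\bigcap_r L(r)=0$, which gives $A_{\infty}=J$ and therefore $BP^*(X)/(I(\infty)^{\infty}\text{-anisotropic classes})=BP^*(X)/J=BP^*_{Num}(X)$. For this I would reuse the estimates established before (\ref{one}): fixing a dimension $d$, one has $(I(\infty)^r\cdot N)_d\subseteq L(r)_d\subseteq(I(\infty)^{r-s}\cdot N)_d$ for all large $r$, so $\bigcap_r L(r)_d\subseteq\bigcap_r(I(\infty)^r\cdot N)_d$. Since $v_0=p$ and every $v_j$ with $j>0$ has dimension at least $p-1$, any homogeneous element of $I(\infty)^r$ of dimension $\leq d$ is divisible by $p^{\,r-\lceil d/(p-1)\rceil}$; hence $(I(\infty)^r\cdot N)_d\subseteq p^{\,r-\lceil d/(p-1)\rceil}\cdot N_d$, and as $N_d$ is a finitely generated torsion-free $\zz_{(p)}$-module, $\bigcap_m p^m N_d=0$. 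Thus $\bigcap_r L(r)_d=0$ for every $d$, as needed.

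The only genuinely new input is \cite[Theorem 4.8]{INCHKm}, applied at each finite level $r$; everything else is formal inverse-limit bookkeeping plus the vanishing $\bigcap_r L(r)=0$, which is essentially the computation already carried out before (\ref{one}). The one step requiring a moment's care is the identification $A_r=q^{-1}(L(r))$, i.e.\ that the finite-level isotropic-to-numerical isomorphism really matches the anisotropic submodule with the preimage of the numerically trivial one; but this is immediate once both sides are realised as quotients of $BP^*(X)$ and one observes that the comparison map respects these presentations. So I expect no substantial obstacle beyond invoking the earlier results.
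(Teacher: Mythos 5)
Your proposal is correct and follows essentially the same route as the paper: the first identity is obtained by passing to the inverse limit of the finite-level isomorphisms of \cite[Theorem 4.8]{INCHKm} via (\ref{one}) and (\ref{two}), and the integral formula comes from matching the kernels of the two maps out of $BP^*(X)$. The only difference is one of explicitness: the paper simply asserts that the kernel of $BP^*(X)\row (BP_{I(\infty)})^*_{Num}(X)$ is the set of integral numerically trivial classes (relying on the injectivity established in the preceding discussion), whereas you re-derive the needed vanishing $\bigcap_r L(r)=0$ from the graded estimates and the $p$-divisibility argument --- a legitimate filling-in of a step the paper leaves implicit.
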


\begin{proof}
The fact that the isotropic version coincides with the numerical 
one follows from
\cite[Theorem 4.8]{INCHKm} together with (\ref{one}) and
(\ref{two}). 

The kernel of the natural map $BP^*(X)\row (BP_{I(\infty)})^*_{Num}(X)$
consists of (integral) numerically trivial classes, while the kernel of the natural map $BP^*(X)\row (BP_{I(\infty)})^*_{iso}(X)$ consists of $I(\infty)^{\infty}$-anisotropic classes. Since these coincide, we get the formula for $BP^*_{Num}(X)$.
 \Qed
\end{proof}

\begin{rem}
The advantage of our Theorem \ref{BPIadic-thm} in comparison to \cite[Theorem 7.5]{IN} is that it holds for any prime $p$, while the latter
result holds only for large primes, namely, for $p\geq\ddim(X)$. Moreover,
for small primes it is just wrong, as was shown in Section \ref{sec-two}.
 \Red
\end{rem}

Since $BP$ has no torsion, we get:

\begin{cor}
Let $k$ be flexible. Then torsion classes in $BP^*(X)$ are $I(\infty)^{\infty}$-anisotropic.
\end{cor}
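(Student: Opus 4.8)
The plan is to deduce this immediately from Theorem \ref{BPIadic-thm}. That theorem identifies the kernel of the natural map $BP^*(X)\row BP^*_{Num}(X)$ — which by definition is the submodule $J$ of numerically trivial classes — with the submodule of $I(\infty)^{\infty}$-anisotropic classes. So it suffices to check that every torsion class in $BP^*(X)$ is numerically trivial, i.e. lies in $J$.

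For this, let $u\in BP^*(X)$ satisfy $m\cdot u=0$ for some non-zero integer $m$; since we work $p$-locally we may take $m=p^n$. For any $v\in BP^*(X)$, the value of the degree pairing $\la u,v\ra\in BP$ satisfies $m\cdot\la u,v\ra=\la m\cdot u,v\ra=0$ by bilinearity of the pairing. But $BP=\zz_{(p)}[v_1,v_2,\ldots]$ is a polynomial ring over $\zz_{(p)}$, hence torsion-free, so $\la u,v\ra=0$. As $v$ was arbitrary, $u\in J$, that is, $u$ is numerically trivial.

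Combining the two steps gives the claim: a torsion class is numerically trivial, hence $I(\infty)^{\infty}$-anisotropic by Theorem \ref{BPIadic-thm} (equivalently, $I(\infty)^r$-anisotropic for every $r$, by the displayed formula for $BP^*_{Num}(X)$). There is essentially no obstacle to overcome here — the whole content sits in Theorem \ref{BPIadic-thm}, and torsion-freeness of $BP$ is exactly what makes the passage from ``torsion'' to ``numerically trivial'' automatic. One could equivalently phrase the entire corollary as the remark that a torsion class is annihilated by the $BP$-valued degree form and therefore, over a flexible field, must split off as a push-forward from $p^r$-anisotropic varieties for all $r$.
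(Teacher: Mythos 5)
Your argument is correct and is exactly the one the paper intends: the line ``Since $BP$ has no torsion, we get:'' preceding the corollary is shorthand for your observation that torsion classes pair trivially into the torsion-free ring $BP$, hence are numerically trivial, after which Theorem \ref{BPIadic-thm} finishes the job. No difference in approach.
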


Let now $k$ be any field (of characteristic zero) and $\wt{k}=k(\pp^{\infty})$
be its {\it flexible closure}. Since {\it free} theories are stable under purely transcendental extensions, we get:

\begin{cor}
Let $k$ be any field and $X/k$ be smooth projective. Then
$$
BP^*_{Num}(X)=BP^*(X_{\wt{k}})/(I(\infty)^{\infty}-\text{anisotropic classes}).
$$
\end{cor}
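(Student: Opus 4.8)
The plan is to reduce the statement over an arbitrary field $k$ to the flexible case already settled in Theorem \ref{BPIadic-thm}, using the invariance of the relevant objects under purely transcendental extensions. Write $\wt{k}=k(\pp^{\infty})$ for the flexible closure. First I would recall that, by Theorem \ref{BPIadic-thm} applied to the flexible field $\wt{k}$ and the variety $X_{\wt{k}}$, we already have
$$
BP^*_{Num}(X_{\wt{k}})=BP^*(X_{\wt{k}})/(I(\infty)^{\infty}-\text{anisotropic classes}).
$$
So the entire content to be verified is that the left-hand side does not change when we pass from $k$ to $\wt{k}$, i.e. that the natural map $BP^*_{Num}(X)\row BP^*_{Num}(X_{\wt{k}})$ is an isomorphism.

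The key step is the hint supplied in the excerpt: \emph{free theories are stable under purely transcendental extensions}. By the discussion preceding Theorem \ref{BPIadic-thm}, the numerical quotient of $BP^*$ agrees with the numerical quotient of the free theory $BP^*\otimes_{BP}BP_{I(\infty)}$ rationally, but more to the point, $BP^*_{Num}(X)$ is cut out of $BP^*(X)$ by the degree pairing, which lands in the coefficient ring $BP$ and commutes with base change. I would argue as follows: the degree pairing $BP^*(X)\times BP^*(X)\row BP$ is compatible with the pullback $BP^*(X)\row BP^*(X_{\wt{k}})$, so numerically trivial classes map to numerically trivial classes, giving a well-defined map $BP^*_{Num}(X)\row BP^*_{Num}(X_{\wt{k}})$. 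Injectivity is the easy direction once one knows the pairing matrix (over a suitable finite set of generators $x_1,\ldots,x_m$ coming from Levine--Morel generators) is literally the same matrix over $k$ and over $\wt{k}$, because these generators and their products push forward to the same elements of $BP$ in both cases; hence a class that pairs to zero over $\wt{k}$ already paired to zero over $k$ after expressing it in terms of the $x_i$.

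For surjectivity I would use that $X_{\wt{k}}\row X$ is a (filtered limit of) projective space bundles, so $BP^*(X_{\wt{k}})$ is generated over $BP^*(X)$ (via pullback) by Chern classes of the tautological bundles, which restrict compatibly; more cleanly, since $\wt{k}/k$ is purely transcendental, every class in $BP^*(X_{\wt{k}})$ is defined over $k(\pp^N)$ for some finite $N$, and the specialisation / section argument (pick a $k$-rational point of $\pp^N$, available after noting $BP^*$ is homotopy invariant so the restriction to the fibre over any rational point is an isomorphism onto $BP^*(X)$) shows the pullback $BP^*(X)\row BP^*(X_{\wt{k}})$ is itself surjective, indeed an isomorphism. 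Combining, $BP^*_{Num}(X)\xrightarrow{\sim}BP^*_{Num}(X_{\wt{k}})$, and plugging into the displayed formula over $\wt{k}$ gives the claim.

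The main obstacle I anticipate is making precise the phrase "free theories are stable under purely transcendental extensions" at the level of the \emph{numerical} quotient rather than $BP^*$ itself: one must ensure that the module of numerical generators $x_1,\ldots,x_m$ chosen over $k$ remains a set of numerical generators over $\wt{k}$, and that no new numerically trivial relations appear over $\wt{k}$. This is exactly handled by the observation that $BP^*(X)\row BP^*(X_{\wt{k}})$ is an isomorphism (homotopy invariance plus rational points on the projective spaces) together with the fact that the pairing is computed entirely in the coefficient ring and is therefore unchanged; everything else is formal. I would make sure to cite homotopy invariance of $BP^*$ (from the Levine--Morel framework, \cite{LM}) and the definition of the numerical quotient from the paragraph preceding Theorem \ref{BPIadic-thm}.
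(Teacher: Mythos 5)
Your proposal is correct and takes essentially the same route as the paper: the paper's proof consists precisely of applying Theorem \ref{BPIadic-thm} over the flexible field $\wt{k}$ and invoking the stability of free theories (such as $BP^*$) under purely transcendental extensions to identify $BP^*_{Num}(X)$ with $BP^*_{Num}(X_{\wt{k}})$, which is exactly your reduction. The only slip is the phrase describing $X_{\wt{k}}\row X$ as a (limit of) projective space bundles --- it is a limit of generic fibres of such bundles, not a bundle itself --- but your subsequent specialisation-at-a-rational-point argument is the standard correct justification, so this does not affect the proof.
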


\end{document}